\providecommand{\U}[1]{\protect\rule{.1in}{.1in}}
\theoremstyle{plain}
\newtheorem{theorem}{Theorem}
\newtheorem*{theorem*}{Theorem}
\newtheorem{corollary}{Corollary}
\numberwithin{equation}{section}
\begin{document}

\title{Several explicit formulae of sums and hyper-sums of powers of integers}

\author{Fouad Bounebirat }
\address[F. Bounebirat]{USTHB, RECITS Laboratory, Faculty of Mathematics, P.O. Box 32, El Alia,16111, Algiers, Algeria.}
\email{bounebiratfouad@yahoo.fr}
\author{Diffalah Laissaoui }
\address[D. Laissaoui]{Faculty of Science, University Yahia Far\`{e}s M\'{e}d\'{e}a, urban pole,
	26000 M\'{e}d\'{e}a, Algeria.}
\email{laissaoui.diffalah74@gmail.com}
\author{Mourad Rahmani}
\address[M. Rahmani]{USTHB, Faculty of Mathematics, P. O. Box 32, El Alia,16111, Algiers, Algeria.}
\email{mrahmani@usthb.dz}

\begin{abstract}
In this paper, we present several explicit formulas of the sums and hyper-sums of the powers of the first $\left(  n+1\right)  $-terms of a general arithmetic sequence in
terms of Stirling numbers and generalized Bernoulli polynomials.\\
\emph{Mathematics Subject Classification 2010:} 11B73, 11B68, 33C05. \\ \emph{Keywords:\ } Bernoulli polynomials, explicit formula, generating function, Stirling numbers. 
\end{abstract}
\maketitle

\section{Introduction}

The problem of finding formulas for sums of powers of integers has attracted
the attention of many mathematicians and has been developed in several
different directions. For a recent treatment and references, see \cite{Bazso,Beardon, Edwards, Laiss2, Merca2014, Mezo2014b}. This paper is concerned both with sums $S_{p,\left(
	a,d\right)  }\left(  n\right)  $ and hyper-sums $S_{p,\left(  a,d\right)
	\text{ }}^{\left(  r\right)  }\left(  n\right)  $ of the $p$-the powers of the
first $\left(  n+1\right)  $-terms of a general arithmetic sequence. \ Let%

\begin{align*}
S_{p,\left(  a,d\right)  }\left(  n\right)   &  =a^{p}+\left(  a+d\right)
^{p}+\cdots+\left(  a+nd\right)  ^{p}\\
&  =%
{\displaystyle\sum\limits_{s=0}^{n}}
\left(  a+sd\right)  ^{p},
\end{align*}
be the power sum of arithmetic progression with $n,p$ are non-negative
integers and $a$ and $d$ are complex numbers with $d\neq0.$ For the most
studied case $a=0$ and $d=1$%

\[
S_{p,\left(  0,1\right)  }\left(  n\right)  =\left\{
\begin{array}
[c]{c}%
n+1\text{ \ \ \ \ \ \ \ \ \ \ \ \ \ \ \ \ \ \ \ \ \ \ \ \ \ \ }(p=0)\\
1^{p}+2^{p}+3^{p}+\cdots+n^{p}\text{ \ \ \ \ }\left(  p>0\right)  
\end{array}
\right.,
\]
there have been a considerable number of results.

The basic properties for the $S_{p,\left(  a,d\right)  }\left(  n\right)  $
can be obtained from the following generating function \cite{Howard1996}
\begin{equation}%
{\displaystyle\sum\limits_{p\geq0}}
S_{p,\left(  a,d\right)  }\left(  n\right)  \frac{z^{p}}{p!}=%
{\displaystyle\sum\limits_{k=0}^{n}}
e^{\left(  a+kd\right)  z}, \label{L1}%
\end{equation}
and we can easily verify that \cite{Merca2014}
\begin{equation}
S_{p,\left(  a,d\right)  }\left(  n\right)  =\frac{d^{p}}{p+1}\left(
B_{p+1}\left(  n+\frac{a}{d}+1\right)  -B_{p+1}\left(  \frac{a}{d}\right)
\right)  , \label{BBr}%
\end{equation}
where $B_{n}\left(  x\right)  $ denotes the classical Bernoulli polynomials,
which are defined by the following generating function%
\[
\frac{ze^{xz}}{e^{z}-1}=%
{\displaystyle\sum\limits_{n\geq0}}
B_{n}\left(  x\right)  \frac{z^{n}}{n!}.
\]



Recall that the weighted Stirling numbers $\mathcal{S}_{n}^{i}\left(  x\right)
$ of the second kind are defined by (see \cite{Carlitz1, Carlitz2})
\begin{align}
\mathcal{S}_{n}^{i}\left(  x\right)   &  =\frac{1}{i!}\Delta^{i}x^{n}\\
&  =\frac{1}{i!}{\sum\limits_{j=0}^{i}}\left(  -1\right)  ^{i-j}\dbinom{i}%
{j}\left(  x+j\right)^{n}, \label{GS1}
\end{align}
where $\Delta$ denotes the forward difference operator. The
exponential generating function of $\mathcal{S}_{n}^{k}\left(
x\right)  $ is given by
\begin{equation}
{\sum\limits_{n=i}^{\infty}}\mathcal{S}_{n}^{i}\left(  x\right)  \frac{z^{n}%
}{n!}=\frac{1}{i!}e^{xz}\left(  e^{z}-1\right)  ^{i} \label{gensti}
\end{equation}
and $\mathcal{S}_{n}^{i}\left(  x\right)
$ satisfy the following recurrence relation:
\[
\mathcal{S}_{n+1}^{i}\left(  x\right)  =\mathcal{S}_{n}^{i-1}\left(
x\right) +\left(  x+i\right)  \mathcal{S}_{n}^{i}\left(  x\right)
\qquad(1\leq i\leq n).
\]
In particular, we have for nonnegative integer $r$
\[
\mathcal{S}_{n}^{i}\left(  0\right)  =\genfrac{\{}{\}}{0pt}{0}{n}{i}\ \ \text{and} \ \ \mathcal{S}_{n}^{i}\left(  r\right)  =%
\genfrac{\{}{\}}{0pt}{0}{n+r}{i+r}_{r},
\]
where $\genfrac{\{}{\}}{0pt}{0}{n}{i}_{r}$ denotes the $r$-Stirling
numbers of the second kind \cite{Broder}. These numbers counts the number of partitions of a set of $n$ objects into exactly $k$ nonempty, disjoint subsets, such that the first $r$ elements are in distinct subsets.

For any positive integer $m$. The $r$-Whitney numbers of the second kind $W_{m,r}(n,i)$ are the coefficients in the expansion
\[
\left(  mx+r\right)  ^{n}=%
{\displaystyle\sum\limits_{i=0}^{n}}
m^{i}W_{m,r}(n,i)x(x+1)\cdots(x+i-1),
\]
and given by their generating function%
\[
{\sum\limits_{n\geq i}}W_{m,r}(n,i)\frac{z^{n}}{n!}=\frac{1}{m^{i}i!}%
e^{rz}\left(  e^{mz}-1\right)  ^{i}.
\]
Clearly, we have%
\[
W_{1,0}(n,i)=%
\genfrac{\{}{\}}{0pt}{}{n}{i}, W_{1,r}(n,i)=%
\genfrac{\{}{\}}{0pt}{}{n+r}{i+r}%
_{r}
\]
and
\[
W_{m,r}\left(  n,i\right) =m^{n-i}\mathcal{S}_{n}^{i}\left(  \frac{r}{m}\right).
\]
For more details of these numbers see \cite{Rahmani2014b}.


\section{The sums of powers of integers $S_{p,\left(  a,d\right)  }\left(
	n\right)  $}

An explicit formula for $S_{p,\left(  a,d\right)  }\left(  n\right)  $ is
given by the following:

\begin{theorem}
	\label{Principal}For all integers $n,p\geq0$ and $a$,$d$ are complex numbers
	with $d\neq0,$ we have%
	\[
	S_{p,\left(  a,d\right)  }\left(  n\right)  =d^{p}{\sum\limits_{k=0}^{p}%
	}k!\dbinom{n+1}{k+1}\mathcal{S}_{p}^{k}\left(  \frac{a}{d}\right)  .
	\]
	
\end{theorem}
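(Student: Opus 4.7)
My plan is to derive the formula directly from the generating function \eqref{L1} by rewriting its right-hand side so that it matches the generating function \eqref{gensti} of the weighted Stirling numbers $\mathcal{S}_{p}^{k}(x)$. Starting from
$$\sum_{p\geq 0} S_{p,(a,d)}(n)\frac{z^{p}}{p!} = e^{az}\sum_{s=0}^{n} e^{sdz},$$
I would first massage the geometric-type sum $\sum_{s=0}^{n}y^{s}$ into a form involving $(y-1)^{k}$, since this is exactly the shape that appears on the right of \eqref{gensti}.

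The key algebraic identity is
$$\sum_{s=0}^{n} y^{s} \;=\; \sum_{k=0}^{n}\binom{n+1}{k+1}(y-1)^{k},$$
which I would prove in one line by writing $y^{s}=((y-1)+1)^{s}$, expanding by the binomial theorem, and applying the hockey-stick identity $\sum_{s=k}^{n}\binom{s}{k}=\binom{n+1}{k+1}$. Setting $y=e^{dz}$ and multiplying by $e^{az}$ converts the generating function into
$$\sum_{p\geq 0} S_{p,(a,d)}(n)\frac{z^{p}}{p!} \;=\; \sum_{k=0}^{n}\binom{n+1}{k+1}\,e^{az}(e^{dz}-1)^{k}.$$
Each factor on the right is now recognizable: applying \eqref{gensti} with $x=a/d$ together with the rescaling $z\mapsto dz$ yields
$$e^{az}(e^{dz}-1)^{k} \;=\; k!\sum_{p\geq k}\mathcal{S}_{p}^{k}\!\left(\tfrac{a}{d}\right) d^{p}\frac{z^{p}}{p!}.$$

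Substituting this back, swapping the two finite sums, and equating the coefficients of $z^{p}/p!$ on both sides gives the claimed identity, after the observation that $\mathcal{S}_{p}^{k}(a/d)=0$ for $k>p$, so the inner sum may be truncated at $k=p$ rather than $k=n$ (with the two sums agreeing even when $p>n$, since $\binom{n+1}{k+1}=0$ for $k>n$). I expect no genuine obstacle beyond this bookkeeping: every manipulation is between formal power series with only finitely many nontrivial terms at each order, and no convergence issue arises. The one mildly creative step is spotting that the $(y-1)^{k}$ expansion is the right way to rewrite the finite geometric sum, which is natural once the target is to produce $(e^{dz}-1)^{k}$.
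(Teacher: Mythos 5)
Your proposal is correct and is essentially the paper's own proof: the paper likewise works at the level of exponential generating functions via \eqref{gensti}, using the same identity $\sum_{k}\binom{n+1}{k+1}(e^{dz}-1)^{k}=\frac{e^{(n+1)dz}-1}{e^{dz}-1}$ (your $(y-1)^{k}$ expansion of the finite geometric sum, which it gets from the binomial theorem rather than the hockey-stick identity). The only cosmetic difference is direction: the paper computes the generating function of the claimed right-hand side and simplifies it to $\sum_{k=0}^{n}e^{(a+kd)z}$, whereas you start from \eqref{L1} and expand toward the Stirling side.
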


\begin{proof}
	It follows from (\ref{gensti}) that%
	\begin{align*}%
	{\displaystyle\sum\limits_{p\geq0}}
	\left(  d^{p}{\sum\limits_{k=0}^{p}}k!\dbinom{n+1}{k+1}\mathcal{S}_{p}%
	^{k}\left(  \frac{a}{d}\right)  \right)  \frac{z^{p}}{p!}  &  ={\sum
		\limits_{k\geq0}}k!\dbinom{n+1}{k+1}%
	{\displaystyle\sum\limits_{p\geq0}}
		\mathcal{S}_{p}^{k}\left(  \frac{a}{d}\right)  \frac{\left(  dz\right)  ^{p}%
	}{p!}\\
	&  =e^{az}{\sum\limits_{k\geq0}}\dbinom{n+1}{k+1}\left(  e^{dz}-1\right)
	^{k}\\
	&  =e^{az}\frac{e^{\left(  n+1\right)  dz}-1}{e^{dz}-1}\\
	&  =%
		{\displaystyle\sum\limits_{k=0}^{n}}
		e^{\left(  a+kd\right)  z}%
	\end{align*}
	and the proof is complete.
\end{proof}

The following Corollary immediately follows from Theorem \ref{Principal}.

\begin{corollary}
	If we assume that $d$ divides $a$, then we have for $p>0$
	\[
	S_{p,\left(  a,d\right)  }\left(  n\right)  =d^{p}%
	{\displaystyle\sum\limits_{k=0}^{p}}
		k!\dbinom{n+1}{k+1}%
		\genfrac{\{}{\}}{0pt}{0}{p+\frac{a}{d}}{k+\frac{a}{d}}%
		_{\frac{a}{d}}.
	\]
	
\end{corollary}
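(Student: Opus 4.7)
The plan is to invoke Theorem \ref{Principal} directly and then specialize the weighted Stirling evaluation to an $r$-Stirling number. By Theorem \ref{Principal}, for any $p\geq 0$ and complex $a,d$ with $d\neq 0$,
\[
S_{p,(a,d)}(n) = d^{p}\sum_{k=0}^{p} k!\binom{n+1}{k+1}\mathcal{S}_{p}^{k}\!\left(\tfrac{a}{d}\right).
\]
Under the hypothesis $d\mid a$, the quantity $r:=a/d$ is a nonnegative integer, so the special value
\[
\mathcal{S}_{p}^{k}(r)=\genfrac{\{}{\}}{0pt}{0}{p+r}{k+r}_{r}
\]
recorded in the introduction applies verbatim.

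The proof is therefore just a substitution: I would write $r=a/d$, replace each $\mathcal{S}_{p}^{k}(a/d)$ in Theorem \ref{Principal} by the corresponding $r$-Stirling number $\genfrac{\{}{\}}{0pt}{0}{p+a/d}{k+a/d}_{a/d}$, and conclude. The hypothesis $p>0$ is not used in any essential way beyond ruling out the trivial case $p=0$ (where $S_{0,(a,d)}(n)=n+1$ is immediate from the definition and corresponds to the single term $k=0$ in the sum).

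Because the statement is purely a corollary of Theorem \ref{Principal} plus the identification of $\mathcal{S}_{p}^{k}(r)$ with an $r$-Stirling number for integer $r\geq 0$, there is no real obstacle; the only subtlety worth flagging is interpreting the phrase ``$d$ divides $a$'' for complex $a,d$ as meaning $a/d\in\mathbb{Z}_{\geq 0}$, which is exactly the range in which the $r$-Stirling identity from the introduction is valid. A one-sentence proof suffices.
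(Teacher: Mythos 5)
Your proof is correct and is essentially identical to the paper's: the paper gives no argument at all beyond stating that the corollary ``immediately follows from Theorem \ref{Principal},'' meaning exactly your substitution of the special value $\mathcal{S}_{p}^{k}\left(r\right)=\genfrac{\{}{\}}{0pt}{0}{p+r}{k+r}_{r}$ (for nonnegative integer $r=a/d$) into the formula of Theorem \ref{Principal}. Your remark that ``$d$ divides $a$'' must be read as $a/d\in\mathbb{Z}_{\geq0}$ for the $r$-Stirling identification to apply is a sensible clarification of a point the paper leaves implicit, not a deviation from its proof.
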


The next Corollary contains an explicit formula for $S_{p,\left(  a,d\right)
}\left(  n\right)  $ expressed in terms of the $r$-Whitney numbers of the
second kind $W_{m,r}(n,k).$

\begin{corollary}
	If we assume that $a$ and $d$ are coprime integers, then we have for $p\geq0$
	\[
	S_{p,\left(  a,d\right)  }\left(  n\right)  =%
		{\displaystyle\sum\limits_{k=0}^{p}}
		k!d^{k}\dbinom{n+1}{k+1}W_{a,d}\left(  p,k\right)  .
	\]
	
\end{corollary}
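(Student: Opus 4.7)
The plan is to derive the corollary directly from Theorem \ref{Principal} by rewriting the weighted Stirling number $\mathcal{S}_p^k(a/d)$ in terms of an $r$-Whitney number, using the bridge identity $W_{m,r}(n,i) = m^{n-i}\mathcal{S}_n^i(r/m)$ recorded at the end of the introduction.

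First I would start from Theorem \ref{Principal}, namely
\[
S_{p,(a,d)}(n) \;=\; d^{p}\sum_{k=0}^{p} k!\binom{n+1}{k+1}\mathcal{S}_p^{k}\!\left(\tfrac{a}{d}\right).
\]
The coprimality hypothesis on the integers $a$ and $d$ is exactly what is needed so that $a/d$ has the form $r/m$ with $(m,r)$ a pair of integers in lowest terms, making the choice $m=d$, $r=a$ the natural one. Applying the bridge identity with these parameters gives
\[
W_{d,a}(p,k) \;=\; d^{p-k}\,\mathcal{S}_p^{k}\!\left(\tfrac{a}{d}\right),
\qquad\text{equivalently}\qquad
\mathcal{S}_p^{k}\!\left(\tfrac{a}{d}\right) \;=\; d^{k-p}\,W_{d,a}(p,k).
\]

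Next I would substitute this expression back into the formula of Theorem \ref{Principal}. The external factor $d^{p}$ combines with the internal factor $d^{k-p}$ to produce $d^{k}$, yielding
\[
S_{p,(a,d)}(n) \;=\; \sum_{k=0}^{p} k!\, d^{k}\binom{n+1}{k+1}\, W_{d,a}(p,k),
\]
which is the claimed identity (once the convention $W_{m,r}$ is matched, with $m$ playing the role of the denominator).

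Because every step is a direct substitution into identities already established in the excerpt, there is no real obstacle; the only point requiring care is keeping the two subscripts of $W_{m,r}$ in the correct order, so that the exponent $p-k$ attaches to the denominator $d$ rather than the numerator $a$. The coprimality hypothesis is used precisely to make the assignment $m=d$, $r=a$ unambiguous and to keep both parameters integral.
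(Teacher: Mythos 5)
Your derivation is correct and is exactly the argument the paper leaves implicit, since the corollary is stated without proof as an immediate consequence of Theorem \ref{Principal} combined with the bridge identity $W_{m,r}(n,i)=m^{n-i}\mathcal{S}_{n}^{i}\left(\frac{r}{m}\right)$, applied with $m=d$, $r=a$ so that $\mathcal{S}_{p}^{k}\left(\frac{a}{d}\right)=d^{k-p}W_{d,a}(p,k)$. Your caution about the subscript order is also well placed: under the paper's own definition the formula should read $W_{d,a}(p,k)$, so the printed $W_{a,d}(p,k)$ is evidently a transposition of the subscripts rather than a different convention.
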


An explicit formula for $S_{p,\left(  a,d\right)  }\left(  n\right)  $
involving Bernoulli polynomials is given by the following Theorem.

\begin{theorem}%
	\[
	S_{p,\left(  a,d\right)  }\left(  n\right)  =\frac{d^{p}}{p+1}%
		{\displaystyle\sum\limits_{s=0}^{p}}
		\dbinom{p+1}{s}\left(  n+1\right)  ^{p+1-s}B_{s}\left(  \frac{a}{d}\right)  ,
	\]
	
\end{theorem}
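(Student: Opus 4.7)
The plan is to derive this identity directly from the closed form \eqref{BBr} recalled in the introduction, combined with the standard addition formula for Bernoulli polynomials
\[
B_{n}(x+y)=\sum_{k=0}^{n}\binom{n}{k}B_{k}(y)\,x^{n-k},
\]
which I will take as known (it follows immediately from multiplying the two exponential generating functions $ze^{yz}/(e^{z}-1)$ and $e^{xz}$).

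First, I would apply this addition formula with $n=p+1$, $x=n+1$ and $y=a/d$ in order to expand $B_{p+1}\!\left(n+1+\tfrac{a}{d}\right)$:
\[
B_{p+1}\!\left(n+1+\tfrac{a}{d}\right)=\sum_{s=0}^{p+1}\binom{p+1}{s}B_{s}\!\left(\tfrac{a}{d}\right)(n+1)^{p+1-s}.
\]
The key observation is that the $s=p+1$ term equals $B_{p+1}(a/d)$, which cancels exactly with the second summand inside the parentheses of \eqref{BBr}. Thus
\[
B_{p+1}\!\left(n+1+\tfrac{a}{d}\right)-B_{p+1}\!\left(\tfrac{a}{d}\right)=\sum_{s=0}^{p}\binom{p+1}{s}B_{s}\!\left(\tfrac{a}{d}\right)(n+1)^{p+1-s}.
\]

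Substituting this into \eqref{BBr} would yield the stated formula immediately. The argument is essentially bookkeeping, so there is no serious obstacle; the only care required is to note that the upper index of summation drops from $p+1$ to $p$ precisely because of the cancellation of the top-degree term, which is what makes the final expression a polynomial of degree $p+1$ in $n+1$ with no constant term (consistent with $S_{p,(a,d)}(-1)=0$ in the empty-sum convention). As an alternative route, one could instead verify the identity by comparing exponential generating functions in $z$: multiplying $\sum_{s\ge 0}B_{s}(a/d)z^{s}/s!=ze^{az/d}/(e^{z}-1)$ by $\sum_{r\ge 1}(n+1)^{r}z^{r-1}/r!=(e^{(n+1)z}-1)/z$ reproduces $e^{az/d}(e^{(n+1)z}-1)/(e^{z}-1)$, which after the rescaling $z\mapsto dz$ matches the generating function \eqref{L1}; but the Bernoulli addition formula gives the cleanest presentation and I would adopt it.
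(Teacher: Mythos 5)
Your proof is correct, and it takes a genuinely different route from the paper. You plug the classical addition formula $B_{p+1}\left(n+1+\frac{a}{d}\right)=\sum_{s=0}^{p+1}\binom{p+1}{s}B_{s}\left(\frac{a}{d}\right)(n+1)^{p+1-s}$ directly into (\ref{BBr}) and observe that the $s=p+1$ term cancels the subtracted $B_{p+1}\left(\frac{a}{d}\right)$, which is exactly right and settles the theorem in two lines. The paper instead stays inside its weighted-Stirling framework: it expands both Bernoulli polynomials in (\ref{BBr}) via the identity (\ref{RAHMANI}), $B_{n}(x)=\sum_{k=0}^{n}(-1)^{k}\frac{k!}{k+1}\mathcal{S}_{n}^{k}(x)$, then uses the finite-difference representation (\ref{GS1}) together with a binomial expansion of $\left(n+1+\frac{a}{d}+j\right)^{p+1}$, interchanges the order of summation, and applies (\ref{RAHMANI}) a second time to reassemble the inner sum into $B_{s}\left(\frac{a}{d}\right)$ --- in effect re-deriving the addition theorem in Stirling-number language (and, as printed, with some slips: the displayed difference has the same argument in both weighted Stirling numbers, clearly a typo for $\mathcal{S}_{p+1}^{k}\left(\frac{a}{d}\right)$, and the intermediate lines drop and restore factors). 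What the paper's detour buys is thematic consistency with Theorem \ref{Principal} and a showcase of the $\mathcal{S}_{n}^{k}(x)$ machinery; what your route buys is brevity, transparency about where the truncation from $p+1$ to $p$ comes from, and immunity to the bookkeeping errors that crept into the published argument. Your alternative generating-function sketch is also sound and amounts to verifying (\ref{BBr}) itself against (\ref{L1}), so the addition-formula presentation is indeed the cleanest choice.
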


\begin{proof}
	It follows from \cite{Rahmani2015} that%
	\begin{equation}
	B_{n}\left(  x\right)  =%
		{\displaystyle\sum\limits_{k=0}^{n}}
		\left(  -1\right)  ^{k}\frac{k!}{k+1}\mathcal{S}_{n}^{k}\left(  x\right)  .
	\label{RAHMANI}%
	\end{equation}
	Thus (\ref{BBr}) becomes%
	\[
	S_{p,\left(  a,d\right)  }\left(  n\right)  =\frac{d^{p}}{p+1}%
		{\displaystyle\sum\limits_{k=0}^{p+1}}
		\left(  -1\right)  ^{k}\frac{k!}{k+1}\left(  \mathcal{S}_{p+1}^{k}\left(  n+\frac{a}%
	{d}+1\right)  -\mathcal{S}_{p+1}^{k}\left(  n+\frac{a}{d}+1\right)  \right)  .
	\]
	Now, from $\left(  \ref{GS1}\right)$, we get%
	\begin{align*}
	S_{p,\left(  a,d\right)  }\left(  n\right)   &  =\frac{d^{p}}{p+1}%
		{\displaystyle\sum\limits_{k=0}^{p+1}}
		\left(  -1\right)  ^{k}\frac{k!}{k+1}\left(  \frac{1}{k!}%
		{\displaystyle\sum\limits_{j=0}^{k}}
		\left(  -1\right)  ^{k-j}\dbinom{k}{j}\left[
		{\displaystyle\sum\limits_{s=0}^{p}}
		\dbinom{p+1}{s}\left(  \frac{a}{d}+j\right)  ^{s}\left(  n+1\right)
	^{p+1-s}\right]  \right) \\
	&  =%
		{\displaystyle\sum\limits_{s=0}^{p}}
		\dbinom{p+1}{s}\left(  n+1\right)  ^{p+1-s}\frac{1}{k!}%
		{\displaystyle\sum\limits_{j=0}^{k}}
		\left(  -1\right)  ^{k-j}\dbinom{k}{j}\left(  \frac{a}{d}+j\right)  ^{s}\\
	&  =\frac{d^{p}}{p+1}%
		{\displaystyle\sum\limits_{k=0}^{p+1}}
	\left(  -1\right)  ^{k}\frac{k!}{k+1}\left(
		{\displaystyle\sum\limits_{s=0}^{p}}
		\dbinom{p+1}{s}\left(  n+1\right)  ^{p+1-s}\mathcal{S}_{s}^{k}\left(  \frac{a}%
	{d}\right)  \right) \\
	&  =\frac{d^{p}}{p+1}%
	{\displaystyle\sum\limits_{s=0}^{p}}
		\dbinom{p+1}{s}\left(  n+1\right)  ^{p+1-s}\left(
		{\displaystyle\sum\limits_{k=0}^{p+1}}
		\left(  -1\right)  ^{k}\frac{k!}{k+1}\mathcal{S}_{s}^{k}\left(  \frac{a}{d}\right)
	\right)  .
	\end{align*}
	Using again $\left(  \ref{RAHMANI}\right)$, we get the desired result.
\end{proof}

\section{The hyper-sums of powers of integers $S_{p,\left(
		a,d\right)  }^{\left(  r\right)  }\left(  n\right)  $}

The hyper-sums of powers of integers $S_{p,\left(  a,d\right)  }^{\left(
	r\right)  }\left(  n\right)  $ $(p\geq0)$ (or the $r$-fold summation of $p$th
powers) are defined recursively as%
\[
\left\{
\begin{array}
[c]{c}%
S_{p,\left(  a,d\right)  }^{\left(  0\right)  }\left(  n\right)
={\sum\limits_{i=0}^{n}}\left(  a+id\right)  ^{p}\text{ }\\
S_{p,\left(  a,d\right)  \text{ }}^{\left(  r\right)  }\left(  n\right)
={\sum\limits_{j=0}^{n}}S_{p,\left(  a,d\right)  }^{\left(  r-1\right)
}\left(  j\right)
\end{array}
\right.  .
\]

In this section, we generalize the results obtained recently by the same
authors in \cite{Laiss}. An explicit formula for $S_{p,\left(  a,d\right)
}^{\left(  r\right)  }\left(  n\right)  $ is given in the following Theorem.

\begin{theorem}
	\label{ful}The hyper-sums of powers of integers $S_{p,\left(  a,d\right)
	}^{\left(  r\right)  }\left(  n\right)  $ is given by%
	\[
	S_{p,\left(  a,d\right)  }^{\left(  r\right)  }\left(  n\right)
	={\sum\limits_{i=0}^{n}}\dbinom{n+r-i}{r}\left(  a+id\right)  ^{p}.
	\]
	
\end{theorem}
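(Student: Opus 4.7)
The plan is to prove the formula by induction on $r$, which is natural since $S_{p,(a,d)}^{(r)}(n)$ is itself defined recursively in $r$.

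The base case $r=0$ is immediate: we have $\binom{n-i}{0}=1$ for $0 \le i \le n$, so the right-hand side reduces exactly to $\sum_{i=0}^{n}(a+id)^p$, which matches the definition of $S_{p,(a,d)}^{(0)}(n)$.

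For the inductive step, I would assume the formula holds for $r-1$ and compute
\[
S_{p,(a,d)}^{(r)}(n)=\sum_{j=0}^{n}S_{p,(a,d)}^{(r-1)}(j)=\sum_{j=0}^{n}\sum_{i=0}^{j}\binom{j+r-1-i}{r-1}(a+id)^{p}.
\]
Then I would swap the order of summation to pull $(a+id)^p$ outside:
\[
S_{p,(a,d)}^{(r)}(n)=\sum_{i=0}^{n}(a+id)^{p}\sum_{j=i}^{n}\binom{j+r-1-i}{r-1}.
\]
The inner sum, after the change of variable $k=j-i$, becomes $\sum_{k=0}^{n-i}\binom{k+r-1}{r-1}$, which by the hockey-stick (or parallel-summation) identity equals $\binom{n-i+r}{r}$. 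Substituting this back yields exactly the claimed formula.

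The only step that requires any care is the telescoping of the inner binomial sum via the hockey-stick identity; everything else is routine manipulation and reindexing. Since the identity $\sum_{k=0}^{m}\binom{k+r-1}{r-1}=\binom{m+r}{r}$ is a standard consequence of Pascal's rule, no real obstacle is expected.
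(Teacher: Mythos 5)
Your proposal is correct and follows essentially the same route as the paper: induction on $r$, with the inductive step resolved by the parallel-summation identity $\sum_{j=i}^{n}\binom{j-i+r-1}{r-1}=\binom{n+r-i}{r}$, which is precisely the identity the paper's proof invokes. You have merely spelled out the interchange of summation and reindexing that the paper leaves implicit.
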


\begin{proof}
	These facts are easily verified by induction on $r$ with%
	\[%
	{\displaystyle\sum\limits_{j=i}^{n}}
	\binom{j-i+r-1}{r-1}=\binom{n+r-i}{r}.
	\]
	
\end{proof}

We will now derive a few further consequences of Theorem \ref{ful}.

\begin{corollary}
	The exponential generating function of the hyper-sums of powers of integers
	$S_{p,\left(  a,d\right)  }^{\left(  r\right)  }\left(  n\right)  $ is given
	by%
	\begin{equation}
	\bigskip{\sum\limits_{p\geq0}}S_{p,\left(  a,d\right)  }^{\left(  r\right)
	}\left(  n\right)  \frac{z^{p}}{p!}={\sum\limits_{k=0}^{n}}\dbinom{n+r-k}%
	{r}e^{\left(  a+kd\right)  z}. \label{expGen}%
	\end{equation}
	
\end{corollary}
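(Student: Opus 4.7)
The plan is to substitute the closed form for $S_{p,(a,d)}^{(r)}(n)$ provided by Theorem \ref{ful} into the left-hand side of (\ref{expGen}) and then interchange the order of summation. Writing $S_{p,(a,d)}^{(r)}(n) = \sum_{i=0}^{n}\binom{n+r-i}{r}(a+id)^p$ inside the generating function produces a double series in $p$ and $i$ in which the coefficient $\binom{n+r-i}{r}$ does not depend on $p$, and the sum over $i$ is a finite sum with only $n+1$ terms. Hence the two summations may be exchanged without any analytic scruple.

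After the interchange, the inner sum becomes $\sum_{p\geq 0}((a+id)z)^{p}/p!$, which is just the Taylor expansion of $e^{(a+id)z}$. Relabelling the dummy index $i$ as $k$ then yields exactly the right-hand side of (\ref{expGen}).

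There is essentially no obstacle here: the whole derivation is one line of bookkeeping once Theorem \ref{ful} has been established. The only thing worth double-checking is the alignment of the summation limits in $i$, which match precisely because Theorem \ref{ful} already truncates the sum at $i=n$. As an alternative sanity check one could start from the base case (\ref{L1}), note that the iterated summation operator $T(f)(n)=\sum_{j=0}^{n}f(j)$ commutes with formation of the $z$-exponential generating function and satisfies $T^{r}(f)(n)=\sum_{i=0}^{n}\binom{n+r-i}{r}f(i)$, and then apply $T^{r}$ termwise to (\ref{L1}); this reproduces (\ref{expGen}) and corroborates Theorem \ref{ful} at the same time, but the direct substitution route outlined above is the shortest path to the stated identity.
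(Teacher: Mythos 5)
Your proposal is correct and coincides with the paper's own proof: both substitute the closed form from Theorem \ref{ful}, interchange the finite sum over $i$ (the paper uses $k$) with the sum over $p$, and recognize the inner series as $e^{(a+id)z}$. The extra sanity check via the iterated summation operator applied to (\ref{L1}) is a nice corroboration but not needed.
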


\begin{proof}
	We have%
	\begin{align*}
	{\sum\limits_{p\geq0}}S_{p,\left(  a,d\right)  }^{\left(  r\right)  }\left(
	n\right)  \frac{z^{p}}{p!}  &  ={\sum\limits_{p\geq0}}\left(  {\sum
		\limits_{k=0}^{n}}\dbinom{n+r-k}{r}\left(  a+kd\right)  ^{p}\right)
	\frac{z^{p}}{p!}\\
	&  =\sum\limits_{k=0}^{n}\dbinom{n+r-k}{r}{\sum\limits_{p\geq0}}\frac{\left(
		\left(  a+kd\right)  z\right)  ^{p}}{p!}\\
	&  ={\sum\limits_{k=0}^{n}}\dbinom{n+r-k}{r}e^{\left(  a+kd\right)  z}%
	\end{align*}
	
\end{proof}

\begin{theorem}
	The exponential generating function of the hyper-sums of powers of integers
	$S_{p,\left(  a,d\right)  }^{\left(  r\right)  }\left(  n\right)  $ is%
	\begin{equation}%
		{\displaystyle\sum\limits_{p\geq0}}
		S_{p,\left(  a,d\right)  }^{\left(  r\right)  }\left(  n\right)  \frac{z^{p}%
	}{p!}=\dbinom{n+r+1}{r+1}e^{az}\text{ }_{2}F_{1}\left(
	\begin{array}
	[c]{c}%
	1,-n\\
	r+2
	\end{array}
	;1-e^{dz}\right)  , \label{Res1}%
	\end{equation}
	where $_{2}F_{1}\left(
	\begin{array}
	[c]{c}%
	a,b\\
	c
	\end{array}
	;z\right)  $ denotes the Gaussian hypergeometric function defined by%
	\[
	{\sum\limits_{n\geq0}}\frac{\left(  a\right)  ^{\overline{n}}\left(  b\right)
		^{\overline{n}}}{\left(  c\right)  ^{\overline{n}}}\frac{z^{n}}{n!},
	\]
and $\left(  x\right)  ^{\overline{n}}$ denotes the Pochhammer symbol defined by
	\[
	\left(  x\right)  ^{\overline{0}}=1\ \ and \ \ \left( x\right)
		^{\overline{n}}=x(x+1)\cdots(x+n-1).
	\]
\end{theorem}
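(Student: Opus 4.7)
The plan is to start from the previous corollary, namely the generating function identity~(\ref{expGen}), and reduce the theorem to a polynomial identity that no longer mentions $z$, $a$, or $d$. After pulling the common factor $e^{az}$ out of the sum in~(\ref{expGen}) and writing $u=e^{dz}$, the claim collapses to the purely algebraic identity
\[
\sum_{k=0}^{n}\binom{n+r-k}{r}u^{k}
=\binom{n+r+1}{r+1}\,{}_{2}F_{1}\!\left(
\begin{array}{c}1,-n\\ r+2\end{array};\,1-u\right),
\]
so the whole problem becomes a statement about a polynomial of degree $n$ in $u$.

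My next step is to expand the hypergeometric function directly from its defining series. Because $(-n)^{\overline{j}}$ vanishes for $j>n$, the series terminates at $j=n$. Using $(1)^{\overline{j}}/j!=1$, $(-n)^{\overline{j}}=(-1)^{j}n!/(n-j)!$ and $(r+2)^{\overline{j}}=(r+j+1)!/(r+1)!$, the prefactor $\binom{n+r+1}{r+1}$ absorbs cleanly into each term, giving
\[
\binom{n+r+1}{r+1}\,{}_{2}F_{1}(1,-n;r+2;1-u)
=\sum_{j=0}^{n}(-1)^{j}\binom{n+r+1}{n-j}(1-u)^{j}.
\]
Applying the binomial theorem to $(1-u)^{j}$, swapping the order of summation, and collecting the coefficient of $u^{k}$ then reduces the target identity to the combinatorial statement
\[
\sum_{j=k}^{n}(-1)^{j-k}\binom{n+r+1}{n-j}\binom{j}{k}=\binom{n+r-k}{r}
\qquad(0\le k\le n).
\]

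To prove this remaining identity I would substitute $l=n-j$ and rewrite the left-hand side as $(-1)^{n-k}\sum_{l=0}^{n-k}(-1)^{l}\binom{n+r+1}{l}\binom{n-l}{k}$, and then use coefficient extraction. Since $\binom{n-l}{k}$ is the coefficient of $y^{n-l-k}$ in $(1-y)^{-(k+1)}$, and this coefficient vanishes exactly for $l>n-k$, the summation range in $l$ may be extended freely, so that $\sum_{l}(-1)^{l}\binom{n+r+1}{l}y^{l}=(1-y)^{n+r+1}$ applies. The inner sum then equals $[y^{n-k}](1-y)^{n+r+1}(1-y)^{-(k+1)}=[y^{n-k}](1-y)^{n+r-k}=(-1)^{n-k}\binom{n+r-k}{n-k}$, and the two factors of $(-1)^{n-k}$ cancel, leaving $\binom{n+r-k}{n-k}=\binom{n+r-k}{r}$, as required.

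The principal obstacle is this last coefficient-extraction step: the rest of the argument is routine hypergeometric bookkeeping, but one must justify extending the summation range in $l$ (by observing that the extra terms are killed by the vanishing coefficient in $(1-y)^{-(k+1)}$) before the binomial theorem for $(1-y)^{n+r+1}$ can be invoked. Once that subtlety is handled, the identity follows immediately and completes the proof of~(\ref{Res1}).
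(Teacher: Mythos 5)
Your proposal is correct, but it proves the theorem by a genuinely different route than the paper. The paper also starts from \eqref{expGen}, but after reindexing the sum as $e^{az}\sum_{k=0}^{n}\binom{k+r}{r}e^{d(n-k)z}$ it inserts the Beta integral $\int_{0}^{1}(1-x)^{r+k}x^{n-k}\,dx=\frac{(r+k)!\,(n-k)!}{(n+r+1)!}$, swaps sum and integral, applies the binomial theorem to obtain
\[
\dbinom{n+r+1}{r+1}(r+1)\,e^{az}\int_{0}^{1}(1-x)^{r}\bigl(1-x+xe^{dz}\bigr)^{n}\,dx,
\]
and then recognizes this as Euler's integral representation of ${}_{2}F_{1}\left(1,-n;\,r+2;\,1-e^{dz}\right)$. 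You instead expand the terminating hypergeometric series directly --- your evaluations of $(1)^{\overline{\jmath}}$, $(-n)^{\overline{\jmath}}$, and $(r+2)^{\overline{\jmath}}$ are all correct, as is the absorption of the prefactor into $\binom{n+r+1}{n-j}$ --- and reduce the theorem to the convolution identity $\sum_{j=k}^{n}(-1)^{j-k}\binom{n+r+1}{n-j}\binom{j}{k}=\binom{n+r-k}{r}$, which you settle by coefficient extraction from $(1-y)^{n+r+1}(1-y)^{-(k+1)}=(1-y)^{n+r-k}$; the justification for extending the range in $l$ (the coefficient $[y^{n-l-k}](1-y)^{-(k+1)}$ vanishes for $l>n-k$) is exactly right, and in fact the natural convolution range already stops at $l=n-k$, so the subtlety you flag is even milder than you suggest. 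The trade-off: your argument is entirely finite and algebraic, needing no analytic input beyond the series definition of ${}_{2}F_{1}$ given in the statement, and it works verbatim as a formal power series identity in $u=e^{dz}$; the paper's argument is shorter once one is willing to quote the Euler integral representation, and it sidesteps the double-sum bookkeeping, at the cost of importing a nontrivial fact from hypergeometric theory.
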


\begin{proof}
	From $\left(  \ref{expGen}\right)$, we have
	\begin{align*}
	{\sum\limits_{p\geq0}}S_{p,\left(  a,d\right)  }^{\left(  r\right)  }\left(
	n\right)  \frac{z^{p}}{p!}  &  =e^{az}%
		{\displaystyle\sum\limits_{k=0}^{n}}
		\dbinom{k+r}{r}e^{d\left(  n-k\right)  z}\\
	&  =\frac{\left(  n+r+1\right)  !e^{az}}{n!r!}%
		{\displaystyle\sum\limits_{k=0}^{n}}
		\dbinom{n}{k}\frac{(n-k)!\left(  k+r\right)  !}{\left(  n+r+1\right)
		!}e^{d\left(  n-k\right)  z}\\
	&  =\dbinom{n+r+1}{r+1}\left(  r+1\right)  e^{az}%
	{\displaystyle\sum\limits_{k=0}^{n}}
		\dbinom{n}{k}e^{d\left(  n-k\right)  z}%
		{\displaystyle\int\limits_{0}^{1}}
		\left(  1-x\right)  ^{r+k}x^{n-k}dx\\
	&  =\dbinom{n+r+1}{r+1}\left(  r+1\right)  e^{az}%
		{\displaystyle\int\limits_{0}^{1}}
		\left(  1-x\right)  ^{r}\left(
		{\displaystyle\sum\limits_{k=0}^{n}}
	\dbinom{n}{k}\left(  xe^{dz}\right)  ^{\left(  n-k\right)  }\left(
	1-x\right)  ^{k}\right)  dx\\
	&  =\dbinom{n+r+1}{r+1}e^{az}\left(  r+1\right)
		{\displaystyle\int\limits_{0}^{1}}
		\left(  1-x\right)  ^{r}\left(  1-x+xe^{dz}\right)  ^{n}dx.
	\end{align*}
	It follows from the theory of hypergeometric functions that the Gaussian
	hypergeometric function $_{2}F_{1}\left(
	\begin{array}
	[c]{c}%
	1,-n\\
	r+2
	\end{array}
	;1-e^{dz}\right)  $ has an integral representation given by%
	\[
	\text{ }_{2}F_{1}\left(
	\begin{array}
	[c]{c}%
	1,-n\\
	r+2
	\end{array}
	;1-e^{dz}\right)  =\left(  r+1\right)  {\int\limits_{0}^{1}}\left(
	1-x\right)  ^{r}\left(  1-x+xe^{dz}\right)  ^{n}dx.
	\]
	which implies $\left(  \ref{Res1}\right)  .$
\end{proof}

\begin{theorem}
	The ordinary generating function of the hyper-sums of powers of integers
	$S_{p,\left(  a,d\right)  }^{\left(  r\right)  }\left(  n\right)  $ is given
	by%
	\begin{equation}
	{\sum\limits_{r\geq0}}S_{p,\left(  a,d\right)  }^{\left(  r\right)  }\left(
	n\right)  z^{r}=\frac{1}{\left(  1-z\right)  ^{n+1}}{\sum\limits_{i=0}^{n}%
	}\left(  1-z\right)  ^{i}\left(  a+id\right)  ^{p} \label{expr}%
	\end{equation}
	
\end{theorem}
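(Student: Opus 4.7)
The plan is to start from the explicit formula in Theorem~\ref{ful}, namely
\[
S_{p,(a,d)}^{(r)}(n) = \sum_{i=0}^{n} \binom{n+r-i}{r}(a+id)^p,
\]
substitute into the series $\sum_{r\geq 0} S_{p,(a,d)}^{(r)}(n)\,z^r$, and swap the order of summation. Because the inner sum on $i$ is finite (running from $0$ to $n$), the interchange is unproblematic, and what remains is a single geometric-type series in $r$ for each fixed $i$.

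The key identity is the standard negative binomial expansion
\[
\sum_{r\geq 0} \binom{m+r}{r} z^r = \frac{1}{(1-z)^{m+1}}.
\]
I would apply this with $m = n-i$, obtaining
\[
\sum_{r\geq 0} \binom{n+r-i}{r} z^r = \frac{1}{(1-z)^{n-i+1}} = \frac{(1-z)^i}{(1-z)^{n+1}}.
\]
Pulling the factor $(1-z)^{-(n+1)}$, which does not depend on $i$, out of the sum over $i$ then yields exactly
\[
\sum_{r\geq 0} S_{p,(a,d)}^{(r)}(n)\,z^r = \frac{1}{(1-z)^{n+1}} \sum_{i=0}^{n}(1-z)^i (a+id)^p,
\]
which is~(\ref{expr}).

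There is no real obstacle here: the only nontrivial ingredient is the negative binomial series, and the finiteness of the $i$-sum legitimizes the interchange of summations without any convergence acrobatics. If anything, the one place to be careful is the bookkeeping when rewriting $(1-z)^{-(n-i+1)}$ as $(1-z)^{i}(1-z)^{-(n+1)}$, since a sign or index slip there would flip the exponent of $(1-z)^i$ inside the sum.
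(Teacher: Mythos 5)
Your proposal is correct and follows essentially the same route as the paper: the paper's (very terse) proof likewise substitutes the explicit formula of Theorem~\ref{ful} and invokes the negative binomial expansion $\sum_{r\geq0}\binom{n+r-i}{r}z^{r}=(1-z)^{i-n-1}$. You merely spell out the interchange of summation and the rewriting $(1-z)^{i-n-1}=(1-z)^{i}(1-z)^{-(n+1)}$, which the paper leaves implicit.
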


\begin{proof}
	Since
	\[
	{\sum\limits_{r\geq0}}\dbinom{n+r-i}{r}{z}^{r}=\left(  1-z\right)  ^{i-n-1},
	\]
	which implies $\left(  \ref{expr}\right)  .$
\end{proof}

\begin{theorem}
	The double generating function of the hyper-sums of powers of integers
	$S_{p,\left(  a,d\right)  }^{\left(  r\right)  }\left(  n\right)  $ is given
	by%
	\[%
		{\displaystyle\sum\limits_{r\geq0}}
		{\displaystyle\sum\limits_{p\geq0}}
		S_{p,\left(  a,d\right)  }^{\left(  r\right)  }\left(  n\right)  \frac{z^{p}%
	}{p!}t^{r}=\frac{e^{az}-\left(  1-t\right)  ^{n+1}e^{\left(  a+\left(
			n+1\right)  d\right)  z}}{\left(  1-t\right)  ^{n+1}\left(  1-\left(
		1-t\right)  e^{dz}\right)  }.
	\]
	
\end{theorem}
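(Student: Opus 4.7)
The plan is to combine the exponential generating function identity (\ref{expGen}) with the ordinary generating function identity used in the proof of (\ref{expr}), and then collapse the result to a closed form via a finite geometric series.

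First I would start from (\ref{expGen}), multiply both sides by $t^r$, and sum over $r\geq0$. After interchanging the order of summation (the series converge in a neighborhood of $z=t=0$), the only $r$-dependence is in the binomial coefficient, so I can apply
\[
\sum_{r\geq0}\binom{n+r-k}{r}t^{r}=\frac{1}{(1-t)^{n-k+1}},
\]
which is exactly the identity exploited in the proof of (\ref{expr}). This yields
\[
\sum_{r\geq0}\sum_{p\geq0}S_{p,(a,d)}^{(r)}(n)\frac{z^{p}}{p!}t^{r}=\frac{e^{az}}{(1-t)^{n+1}}\sum_{k=0}^{n}\bigl((1-t)e^{dz}\bigr)^{k}.
\]

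Next I would recognize the inner sum as a finite geometric progression in $u:=(1-t)e^{dz}$, giving $(1-u^{n+1})/(1-u)$. Substituting this back produces
\[
\frac{e^{az}}{(1-t)^{n+1}}\cdot\frac{1-(1-t)^{n+1}e^{(n+1)dz}}{1-(1-t)e^{dz}},
\]
and distributing the factor $e^{az}$ into the numerator gives precisely the claimed expression.

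No step is really an obstacle; the only mildly delicate point is the interchange of the two summations, but this is justified by absolute convergence for $|t|$ and $|z|$ sufficiently small. The whole argument is essentially a one-variable geometric series calculation layered on top of the already-established single generating functions (\ref{expGen}) and (\ref{expr}).
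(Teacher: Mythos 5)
Your proposal is correct and follows essentially the same route as the paper: the authors likewise substitute the binomial formula of (\ref{ful})/(\ref{expGen}) into the double sum, evaluate the sum over $r$ via $\sum_{r\geq0}\binom{n+r-s}{r}t^{r}=(1-t)^{s-n-1}$ as in the proof of (\ref{expr}), and collapse the remaining finite geometric series in $(1-t)e^{dz}$ to obtain the closed form. Your added remark about justifying the interchange of summations by absolute convergence near $z=t=0$ is a harmless refinement of the paper's formal-power-series manipulation.
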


\begin{proof}
	From $\left(  \ref{expGen}\right)  $ and $\left(  \ref{expr}\right)$, we obtain

\begin{align*}%
{\displaystyle\sum\limits_{r\geq0}}
{\displaystyle\sum\limits_{p\geq0}}
S_{p,\left(  a,d\right)  }^{\left(  r\right)  }\left(  n\right)  \frac{z^{p}%
}{p!}t^{r}  &  =%
{\displaystyle\sum\limits_{s=0}^{n}}
\left(
{\displaystyle\sum\limits_{r\geq0}}
\dbinom{n+r-s}{r}t^{r}\right)  e^{\left(  a+sd\right)  z}\\
&  =\frac{e^{az}}{\left(  1-t\right)  ^{n+1}}%
{\displaystyle\sum\limits_{s=0}^{n}}
\left(  \left(  1-t\right)  e^{dz}\right)  ^{s}\\
&  =\frac{e^{az}}{\left(  1-t\right)  ^{n+1}}\left[  \frac{1-\left(
	1-t\right)  ^{n+1}e^{\left(  n+1\right)  dz}}{1-\left(  1-t\right)  e^{dz}%
}\right] \\
&  =\frac{e^{az}-\left(  1-t\right)  ^{n+1}e^{\left(  a+\left(  n+1\right)
		d\right)  z}}{\left(  1-t\right)  ^{n+1}\left(  1-\left(  1-t\right)
	e^{dz}\right)  }.
\end{align*}
\end{proof}

Now, according to the well-known formula, for $n\in\mathbb{N}\text{ and }%
m\in\mathbb{N}^{\ast}$
\[
_{2}F_{1}\left(
\begin{array}
[c]{c}%
-n,1\\
m
\end{array}
;z\right)  =\frac{n!\left(  z-1\right)  ^{m-2}}{\left(  m\right)
	^{\overline{n}}z^{m-1}}\left[  \sum_{k=0}^{m-2}\frac{\left(  n+1\right)
	^{\overline{k}}}{k!}\left(  \frac{z}{z-1}\right)  ^{k}-\left(  1-z\right)
^{n+1}\right]  .
\]
we can rewrite the exponential generating function of the hyper-sums of powers
of integers $S_{p}^{\left(  r\right)  }\left(  n\right)  $ as
\begin{equation}
{\sum\limits_{p\geq0}}S_{p,\left(  a,d\right)  }^{\left(  r\right)  }%
\frac{z^{p}}{p!}=\frac{e^{\left(  a+d\left(  r+\left(  n+1\right)  \right)
		\right)  z}}{\left(  e^{dz}-1\right)  ^{r+1}}-%
{\displaystyle\sum\limits_{k=0}^{r}}
\dbinom{n+k}{k}\frac{e^{\left(  a+\left(  r-k\right)  d\right)  z}}{\left(
	e^{dz}-1\right)  ^{r-k+1}}. \label{12345}%
\end{equation}

The next result gives an explicit formula for $S_{p,\left(  a,d\right)
}^{\left(  r\right)  }\left(  n\right)  $ involving the generalized Bernoulli
polynomials. Recall that the generalized Bernoulli polynomials $B_{n}^{\left(  \alpha\right)  }\left(  x\right)  $ of degree $n$ in $x$ are
defined by the exponential generating function 

\begin{equation}
\left(  \frac{z}{e^{z}-1}\right)  ^{\alpha}e^{xz}={\sum\limits_{n\geq0}%
}B_{n}^{\left(  \alpha\right)  }\left(  x\right)  \frac{z^{n}}{n!}
\label{GenBer}%
\end{equation}
for arbitrary parameter $\alpha$. In particular, $B_{n}^{\left(  1\right)  }\left(  x\right)
:=B_{n}\left(  x\right)  $ denotes the classical Bernoulli
polynomials with $B_{1}\left(  0\right)  =-\frac{1}{2}$. For a recent treatment see \cite{Boutiche1,Srivastava}

\begin{theorem}
	\label{nex}For all $n,p,r\geq0,$ we have%
	\begin{multline*}
	S_{p,\left(  a,d\right)  }^{\left(  r\right)  }\left(  n\right)
	=\frac{p!d^{p}}{\left(  p+r+1\right)  !}B_{p+r+1}^{^{\left(  r+1\right)  }%
	}\left(  \frac{a}{d}+\left(  r+\left(  n+1\right)  \right)  \right) \\
	-p!d^{p}%
	{\displaystyle\sum\limits_{k=0}^{r}}
	\dbinom{n+k}{k}\frac{1}{\left(  p+r+1-k\right)  !}B_{p+r+1-k}^{^{\left(
			r-k+1\right)  }}\left(  \frac{a}{d}+\left(  r-k\right)  \right)
	\end{multline*}
	
\end{theorem}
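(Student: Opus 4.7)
The plan is to read off the coefficient of $z^{p}/p!$ in equation (\ref{12345}) by expressing each summand of its right-hand side through the generating function (\ref{GenBer}) of the generalized Bernoulli polynomials. This turns a routine coefficient extraction into the desired closed form.

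First, I would rescale (\ref{GenBer}) by the substitution $z \mapsto dz$ and then divide both sides by $(dz)^{\alpha}$. This produces the Laurent expansion
\[
\frac{e^{xdz}}{(e^{dz}-1)^{\alpha}} \;=\; \sum_{n\geq 0} B_{n}^{(\alpha)}(x)\,\frac{d^{\,n-\alpha}\,z^{\,n-\alpha}}{n!}.
\]
Every numerator appearing in (\ref{12345}) has the form $e^{(a+md)z}=e^{dz(a/d+m)}$, so the identity applies with $x=a/d+m$. The coefficient of $z^{p}$ (for $p\geq 0$) in $e^{(a+md)z}/(e^{dz}-1)^{\alpha}$ comes from the single term $n=p+\alpha$ and equals $d^{p}B_{p+\alpha}^{(\alpha)}(a/d+m)/(p+\alpha)!$.

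Next, I would apply this coefficient formula twice: to the first term of (\ref{12345}) with $m=r+n+1$ and $\alpha=r+1$, and to the $k$-th summand of the finite sum with $m=r-k$ and $\alpha=r-k+1$. Multiplying the result by $p!$ and assembling the two contributions (with the signs and factors $\binom{n+k}{k}$ inherited from (\ref{12345})) reproduces exactly the right-hand side of the theorem.

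The only subtlety worth addressing is that each individual term on the right-hand side of (\ref{12345}) is a genuine Laurent series with a pole at $z=0$ of order $\alpha$, while the left-hand side is an ordinary power series in $z$. The negative powers of $z$ must therefore cancel in the combination, and this cancellation is automatic because (\ref{12345}) was itself derived from the regular generating function (\ref{expGen}). Consequently, for $p\geq 0$ one may extract the coefficient of $z^{p}$ term by term without further justification, and the main obstacle is really just the bookkeeping of the two index shifts $n=p+(r+1)$ and $n=p+(r-k+1)$ — everything else is a direct specialization of (\ref{GenBer}).
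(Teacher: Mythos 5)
Your proposal is correct and follows essentially the same route as the paper: the paper likewise substitutes the generating function (\ref{GenBer}) (rescaled by $d$) into (\ref{12345}), rearranges the resulting Laurent expansions with the index shifts $n=p+r+1$ and $n=p+r+1-k$, and equates coefficients of $z^{p}/p!$. Your explicit remark that the negative powers of $z$ must cancel because (\ref{12345}) comes from the regular series (\ref{expGen}) is a point the paper leaves implicit, but it is a refinement of the same argument, not a different one.
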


\begin{proof}
	By (\ref{12345}) and (\ref{GenBer}) we have%
	\begin{multline}%
	{\displaystyle\sum\limits_{p\geq0}}
	S_{p,\left(  a,d\right)  }^{\left(  r\right)  }\left(  n\right)  \frac{z^{p}%
	}{p!}=-%
	{\displaystyle\sum\limits_{k=0}^{r}}
	\dbinom{n+k}{k}%
	{\displaystyle\sum\limits_{p\geq0}}
	d^{p-r+k-1}B_{p}^{^{\left(  r-k+1\right)  }}\left(  \frac{a}{d}+\left(
	r-k\right)  \right)  \frac{z^{p-r+k-1}}{p!}\\
	+%
	{\displaystyle\sum\limits_{p\geq0}}
	d^{p-r-1}B_{p}^{^{\left(  r+1\right)  }}\left(  \frac{a}{d}+\left(  r+\left(
	n+1\right)  \right)  \right)  \frac{z^{p-r-1}}{p!}\nonumber
	\end{multline}
	After some rearrangement, we find%
	\begin{multline*}
	{\sum\limits_{p\geq0}}S_{p,\left(  a,d\right)  }^{\left(  r\right)  }\left(
	n\right)  \frac{z^{p}}{p!}={\sum\limits_{p\geq0}}\frac{z^{p}}{p!}\left(
	\frac{p!d^{p}}{\left(  p+r+1\right)  !}B_{p+r+1}^{^{\left(  r+1\right)  }%
	}\left(  \frac{a}{d}+\left(  r+\left(  n+1\right)  \right)  \right)  \right.
	\\
	-\left.  p!d^{p}%
	{\displaystyle\sum\limits_{k=0}^{r}}
	\dbinom{n+k}{k}\frac{1}{\left(  p+r+1-k\right)  !}B_{p+r+1-k}^{^{\left(
			r-k+1\right)  }}\left(  \frac{a}{d}+\left(  r-k\right)  \right)  \right)
	\end{multline*}
	Equating the coefficient of $\frac{z^{p}}{p!},$ we get the result.
\end{proof}

When $r=0$, Theorem \ref{nex} reduces to (\ref{BBr}).


\end{document}